\newtheorem{theorem}{Theorem}[section]
\newtheorem{proposition}[theorem]{Proposition}
\theoremstyle{definition}
\newtheorem{remark}[theorem]{Remark}
\newtheorem{definition}[theorem]{Definition}
\numberwithin{equation}{section}
\newcommand{\vol}{\textrm{Vol}}
\newcommand{\Var}{\textrm{Var}}
\newcommand{\Ent}{\textrm{Ent}}
\newcommand{\norm}[1]{\left\Vert#1\right\Vert}
\newcommand{\abs}[1]{\left\vert#1\right\vert}
\newcommand{\brac}[1]{\left(#1\right)}
\newcommand{\R}{\mathbb{R}}
\newcommand{\F}{\mathcal{F}}
\newcommand{\eps}{\epsilon}
\newcommand{\Id}{{\rm Id}}
\newlength{\defbaselineskip}
\newcommand{\setlinespacing}[1]           {\setlength{\baselineskip}{#1 \defbaselineskip}}
\renewcommand*{\thefootnote}{\fnsymbol{footnote}}
\begin{document}

\title{Reverse H\"{o}lder Inequalities for log-Lipschitz Functions}

\medskip

\author{Emanuel Milman\thanks{Department of Mathematics, Technion-Israel Institute of Technology, Haifa 32000, Israel. Email: emilman@tx.technion.ac.il. }}

\begingroup    \renewcommand{\thefootnote}{}           \footnotetext{The research leading to these results is part of a project that has received funding from the European Research Council (ERC) under the European Union's Horizon 2020 research and innovation programme (grant agreement No 637851).}

\date{} 
\maketitle

\begin{abstract}
Reverse H\"{o}lder inequalities for a class of functions on a probability space constitute an important tool in Analysis and Probability. After revisiting how a (modified) log-Sobolev inequality can be used to derive reverse H\"{o}lder inequalities for the class of log-Lipschitz functions, we obtain a weaker condition using general Transport-Entropy inequalities, which can also handle approximately log-Lipschitz functions. In its weakest form, the condition degenerates to the assumption of satisfying a concentration inequality. We compare this with a scenario in which the underlying space only satisfies a Poincar\'e inequality. 
\end{abstract}

\section{Introduction}

Let $(\Omega,d,\mu)$ denote a metric-measure space, namely a complete separable metric space $(\Omega,d)$ endowed with a Borel measure $\mu$. In this note, we will always assume that the measure $\mu$ is a \emph{probability} measure. By H\"{o}lder's (or Jensen's) inequality we have:
\begin{equation} \label{eq:intro-Jensen}
q < p \;\; \Rightarrow \;\; \norm{f}_{L^q(\mu)} \leq \norm{f}_{L^p(\mu)} ,
\end{equation}
for all functions $f$ for which the above integrals make sense. We are interested in finding conditions on the space which ensure the validity of the following reverse H\"{o}lder inequalities:
\begin{equation} \label{eq:intro-RH}
 a < q < p < b ~,~ f \in \F  \;\; \Rightarrow \;\; \norm{f}_{L^p(\mu)} \leq C_{\F,q,p} \norm{f}_{L^q(\mu)} ,
\end{equation}
for an appropriate class of functions $\F$ and range parameters $a < b$. Such reverse H\"{o}lder inequalities are in some sense a manifestation of the \emph{concentration} properties of the space. As such, they constitute an important tool in Analysis and Probability, and have found diverse applications in these disciplines (see below).  

Before proceeding, we remark that the class $\F_{1}$ of $1$-Lipschitz functions on $(\Omega,d)$, which is often used to state and study concentration properties of $(\Omega,d,\mu)$, 
is not well-suited for obtaining the reverse H\"{o}lder inequalities (\ref{eq:intro-RH}). Indeed, the class of functions satisfying (\ref{eq:intro-RH}) is a cone, clearly invariant under multiplication $f \mapsto \alpha f$, whereas the class of $1$-Lipschitz functions is not preserved under multiplicative operations, but rather additive ones. As such, a typical concentration result for $1$-Lipschitz functions $f$ is a statement of the form:
\[
\norm{f - \int f d\mu}_{L^p(\mu)} \leq C_p ,
\]
which yields an \emph{additive}, rather than a multiplicative, reverse H\"{o}lder inequality for $1$-Lipschitz functions:
\[
\norm{f}_{L^p(\mu)} \leq \norm{f}_{L^1(\mu)} + C_p . 
\]
Consequently, a natural class of functions for obtaining (\ref{eq:intro-RH}) is that of $L$-\emph{log-Lipschitz} functions $\F_{\log,L}$, defined as those positive functions $f$ on $\Omega$ so $\log f$ is $L$-Lipschitz:
\[
|\log f (x) - \log f(y)| \leq L d(x,y) \;\;\; \forall x,y \in \Omega . 
\]

Let us now give several well-known examples of spaces $(\Omega,d,\mu)$ and classes of functions $\F$ for which (\ref{eq:intro-RH}) holds (which by no means constitutes a comprehensive list). We refer to the next section for missing definitions. 
\begin{itemize}
\item If $(\Omega,d,\mu)$ satisfies a log-Sobolev inequality with constant $\lambda_{LS}$, it is known by the Herbst argument \cite[Proposition 5.4.1]{BGL-Book} that (\ref{eq:intro-RH}) holds for all $L$-log-Lipschitz functions and all $-\infty < q < p < \infty$ with $C_{\F_{log,L},q,p} = \exp(\frac{L^2 (p-q)}{2 \lambda_{LS}})$. In particular, it holds that:
\begin{equation} \label{eq:intro-RH0}
f \in \F_{\log,L} \;\; , \;\; p > 0 \;\; \Rightarrow \;\; \norm{f}_{L^p(\mu)} \leq C_{L,p} \norm{f}_{L^0(\mu)}  ,
\end{equation}
with $C_{L,p} = \exp\brac{\frac{L^2 p}{2 \lambda_{LS}}}$, where of course $\norm{f}_{L^0(\mu)}$ is defined in the limiting sense as $\exp ( \int \log f \, d\mu)$. See also Bobkov--G\"{o}tze \cite{BobkovGotzeLogSobolev} for an extension to the case when $f$ is assumed to be log-Lipschitz only in some averaged sense (and compare with the formulation of Theorem \ref{thm:Lb}). \item When $\Omega$ is a smooth connected oriented Riemannian-manifold-with-boundary $(M,g)$ and $\mu$ is a probability measure with smooth and positive density on $\Omega$, various conditions ensure the validity of a reverse H\"{o}lder inequality for eigenfunctions $f$ of the weighted Neumann Laplacian $\Delta_{g,\mu}$ (with vanishing Neumann boundary conditions $f_\nu|_{\partial M} \equiv 0$). For example, it was shown by Gross \cite[Theorem 5.2.3]{BGL-Book} that the validity of the log-Sobolev inequality is equivalent to the hypercontractivity of the heat semi-group generated by $\Delta_{g,\mu}$; using this, it is immediate to show that \cite[Section 5.3]{BGL-Book}:
\[
\begin{cases} -\Delta_{g,\mu} f = \lambda f \\ f_\nu|_{\partial M} \equiv 0 \end{cases}  , \;\; 1 < q < p <\infty \;\; \Rightarrow \;\; \norm{f}_{L^p(\mu)} \leq \brac{\frac{p-1}{q-1}}^{\frac{\lambda}{2 \lambda_{LS}}} \norm{f}_{L^q(\mu)} .
\]
When moreover the space satisfies a dimensional Sobolev--Gagliardo--Nirenberg inequality \cite[Chapter 6]{BGL-Book}, which is known to be equivalent to boundedness of the heat-kernel (or ultracontractivity of the heat semi-group) \cite[Section 6.3]{BGL-Book}, the above estimates can further be extended to the case $p=\infty$ (but with dimension-dependent constants). In particular, dimension-free reverse H\"{o}lder inequalities have been obtained in \cite{ReverseHolderOnSphere} for spherical harmonics and hence polynomials on the $n$-sphere. More recently, Cianchi--Maz'ya \cite{CianchiMazya-ReverseHolderForEigenfunctions} obtained a precise characterization, in terms of the space's isoperimetric and isocapacitary behaviour, of when a reverse H\"{o}lder inequality with $p < \infty$ and $p=\infty$ holds for the eigenfunctions. 
\item In the discrete setting, an analogous reverse H\"{o}lder inequality is valid for eigenfunctions of the discrete Laplacian whenever hypercontractivity holds. It is worthwhile to note that this observation in the discrete setting, and specifically for the uniform measure $\mu$ on the Hamming cube $\{-1,1\}^n$, most probably predates the one in the continuous setting mentioned above. Indeed, reverse H\"{o}lder inequalities for homogeneous polynomials $f : \{-1,1\}^n \rightarrow \R$ of degree $k$ were obtained by Bonami as early as \cite{Bonami-LambdaPOnCube,Bonami-PhDInJournal}; this was later understood to be a consequence of hypercontractivity by Nelson \cite{Nelson-Hypercontractivity} -- see Exercises 9.37, 9.38 and the Notes at the end of Chapter 9 of \cite{ODonnell-Book}. These observations motivated the foundational work by Kahn--Kalai--Linial \cite{KKL-Influence}, and have had a profound influence on the fields of Combinatorics and Theoretical Computer Science. 
\item When $\Omega$ is a bounded domain in $\R^n$ and $f$ is the associated ground state, namely the eigenfunction of the Laplacian with vanishing Dirichlet boundary conditions corresponding to the first (positive) eigenvalue $\lambda$, various authors have obtained increasingly general and sharp reverse H\"{o}lder inequalities for $f$ (with the Euclidean ball having the same eigenvalue $\lambda$ being the extremal case) -- see the works by Payne--Rayner \cite{PayneRayner1,PayneRayner2}, Kohler-Jobin \cite{KohlerJobin-ReverseHolderForGroundState}  and Chiti \cite{Chiti-ReverseHolderForDirichletEigenfunctions}. \item When $\mu = \exp(-V) dx$ is a log-concave probability measure on $\R^n$, meaning that $V : \R^n \rightarrow \R \cup \{+\infty\}$ is convex, a well-known consequence of Borell's lemma \cite{Borell-logconcave} ensures that for any semi-norm $f$ on $\R^n$, one has:
\[
1 \leq q < p < \infty \;\; \Rightarrow \;\; \norm{f}_{L^p(\mu)} \leq C \frac{p}{q} \norm{f}_{L^q(\mu)} . 
\]
This was extended to all $q \geq 0$ by Lata{\l}a \cite{LatalaZeroMomentKhinchine} and to $q > -1$ by Gu\'edon \cite{Guedon-extension-to-negative-p}. 
\item When $\mu$ is again a log-concave probability measure on $\R^n$ and $f$ is a degree $d$ polynomial, reverse H\"{o}lder inequalities for $f$ have been obtained 
in the range $-1/d < q < p < \infty$, starting from the work of  Bourgain \cite{Bourgain-LK}, and extended using the localization technique by Bobkov \cite{BobkovPolynomials}, Carbery--Wright \cite{CarberyWrightPolynomials} and Nazarov--Sodin--Volberg \cite{NazarovSodinVolbergPolynomials}; for more on this topic, we refer to the excellent survey of Fradelizi \cite{FradeliziUltimateKhinchine}, which also extends these results further.
\item When $\mu$ is still a log-concave probability measure which is in addition assumed to be isotropic, reverse H\"{o}lder inequalities have been obtained for the Euclidean norm $f(x) = |x|$ in increasing degree of precision by Paouris \cite{Paouris-IsotropicTail,PaourisSmallBall}, Klartag \cite{KlartagCLP,KlartagCLPpolynomial,KlartagUnconditionalVariance}, Fleury \cite{FleuryImprovedThinShell}, Gu\'edon--Milman \cite{GuedonEMilmanInterpolating} and Lee--Vempala \cite{LeeVempala-KLS}. 
\end{itemize}

The starting point of this work is the first example above, which 
provides a satisfactory condition for ensuring the validity of (\ref{eq:intro-RH}) for the class of log-Lipschitz functions $\F_{\log}$ -- a log-Sobolev inequality. However, on many natural spaces, a log-Sobolev inequality does not hold, or alternatively holds with very bad constant. As a prototypical example, consider the case of the two-sided exponential probability measure $\nu = \frac{1}{2} \exp(-|x|) dx$ on $\R$. It is well-known that $(\R^n,|\cdot|,\nu^{\otimes n})$ does not satisfy any log-Sobolev inequality (since the latter necessarily yields sub-Gaussian tails), but one may wonder whether (\ref{eq:intro-RH}) still holds for the class of $L$-log-Lipschitz functions $\F_{\log,L}$ in a certain range $- b < q < p < b$. If so, it is clear that necessarily $b \leq 1/L$, since $\norm{f}_{L^p(\nu^{\otimes n})} < \infty$ if and only if $|p| < 1/L$ for the $L$-log-Lipschitz function $f(x) = \exp(L x_1)$. 

As a warm up, we start by providing an answer to the latter question for general spaces satisfying a Poincar\'e inequality. As is known to experts, the Herbst argument can be applied whenever a \emph{modified} log-Sobolev inequality holds. Instead of demonstrating this in full generality, we restrict our attention to the original modified log-Sobolev inequality introduced by Bobkov--Ledoux \cite{BobkovLedouxModLogSobAndPoincare}, which was shown by Bobkov--Gentil--Ledoux \cite{BobkovGentilLedoux} to be equivalent to a Poincar\'e inequality. It is well-known that $(\R^n,|\cdot|,\nu^{\otimes n})$ satisfies a Poincar\'e inequality with sharp constant $\lambda_1 = \frac{1}{4}$ (e.g. \cite[(4.4.3)]{BGL-Book}). 

\begin{theorem}[following Bobkov-Ledoux \cite{BobkovLedouxModLogSobAndPoincare}] \label{thm:intro-Poincare}
Assume that $(\Omega,d,\mu)$ satisfies the following Poincar\'e inequality:
\[
\int g d\mu = 0 \;\; \Rightarrow \;\; \lambda_1 \int g^2 d\mu \leq \int |\nabla g|^2 d\mu ,
\]
for all locally Lipschitz functions $g$. Then for any $L$-log-Lipschitz function $f$:
\[
  - 2 \sqrt{\lambda_1} / L < q < p < 2 \sqrt{\lambda_1} / L  \;\; \Rightarrow \;\; \norm{f}_{L^p(\mu)} \leq C_{L,q,p} \norm{f}_{L^q(\mu)}  ,
\]
with:
\begin{equation} \label{eq:K}
C_{L,q,p} = \exp\brac{L^2 \int_{q}^{p} K(|t| L) dt } ~,~ K(\ell) := \frac{1}{2 \lambda_1} \brac{\frac{ 2 \sqrt{\lambda_1} + \ell }{2 \sqrt{\lambda_1} - \ell}}^2 \exp( \ell \sqrt{5} / \lambda_1) . 
\end{equation}
\end{theorem}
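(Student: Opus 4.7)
The plan is to follow the classical Herbst argument, but based on the Bobkov--Ledoux modified log-Sobolev inequality that is equivalent to Poincar\'e, rather than on a full log-Sobolev inequality. Write $f = e^g$ with $g = \log f$ being $L$-Lipschitz, and set
\[
F(p) := \int e^{pg} d\mu ~,~ \Phi(p) := \frac{1}{p}\log F(p) = \log\norm{f}_{L^p(\mu)} .
\]
A direct differentiation shows that for $p \neq 0$ in the range where $F(p)<\infty$,
\[
\Phi'(p) \;=\; \frac{1}{p^2 F(p)} \brac{ p \int g \, e^{pg} d\mu - F(p) \log F(p) } \;=\; \frac{\Ent_\mu(e^{pg})}{p^2 F(p)} ,
\]
and this formula extends continuously to $p=0$ (giving $\Phi(0)=\int g \, d\mu$). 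Thus the reverse H\"older estimate $\norm{f}_{L^p(\mu)} / \norm{f}_{L^q(\mu)} \leq C_{L,q,p}$ is equivalent to an upper bound on the increment $\Phi(p)-\Phi(q) = \int_q^p \Phi'(t)\, dt$, which in turn reduces to controlling $\Ent_\mu(e^{pg})$ by $p^2 F(p)$.

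Next I invoke the Bobkov--Ledoux modified log-Sobolev inequality, which asserts that under the Poincar\'e assumption with constant $\lambda_1$, for any locally Lipschitz function $h$ with $\snorm{\nabla h}_\infty \leq \ell$ and $\ell < 2\sqrt{\lambda_1}$, one has
\[
\Ent_\mu(e^h) \;\leq\; K(\ell) \int |\nabla h|^2 \, e^h d\mu ,
\]
where $K(\ell)$ is precisely the quantity defined in (\ref{eq:K}). Applying this to $h = pg$, which satisfies $\snorm{\nabla h}_\infty \leq |p|L$, gives
\[
\Ent_\mu(e^{pg}) \;\leq\; K(|p|L) \cdot p^2 L^2 \int e^{pg} d\mu \;=\; K(|p|L) \, p^2 L^2 F(p) ,
\]
valid as long as $|p| L < 2 \sqrt{\lambda_1}$. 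Combining with the formula for $\Phi'(p)$ yields the crucial differential inequality $\Phi'(p) \leq L^2 K(|p|L)$, which after integration from $q$ to $p$ (both in the permitted range $(-2\sqrt{\lambda_1}/L, 2\sqrt{\lambda_1}/L)$) and exponentiation produces exactly the stated bound.

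The routine steps are the Herbst differentiation and the elementary integration; the genuine content and the main obstacle is the quantitative modified log-Sobolev with the specific constant $K(\ell)$ of (\ref{eq:K}). This step is where Bobkov--Ledoux's argument must be carried out (or cited) with attention to the precise dependence of the constant on $\ell$ as $\ell \to 2\sqrt{\lambda_1}$; the blow-up factor $\brac{(2\sqrt{\lambda_1}+\ell)/(2\sqrt{\lambda_1}-\ell)}^2$ reflects the fact that the admissible range for $p$ collapses when $|p|L$ approaches the spectral threshold $2\sqrt{\lambda_1}$, which is sharp in view of the two-sided exponential example mentioned in the introduction. Finally one should verify that the argument is robust for negative $p$: since $f > 0$, the function $e^{pg}$ is positive, $\Ent_\mu(e^{pg}) \geq 0$, and the Lipschitz constant of $pg$ is $|p|L$, so the same estimate applies symmetrically on both sides of $0$.
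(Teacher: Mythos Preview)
Your proposal is correct and follows essentially the same Herbst-type argument as the paper: differentiate $p \mapsto \log \norm{f}_{L^p(\mu)}$, bound the derivative via the Bobkov--Ledoux modified log-Sobolev inequality with the explicit constant $K(\ell)$, and integrate. The only technicality the paper makes explicit that you gloss over is that the Bobkov--Ledoux inequality (\ref{eq:modified-LS}) is stated for \emph{bounded} $g$, so one first assumes $\eps \le f \le 1/\eps$ and removes this restriction by approximation at the end.
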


A key feature of using the Herbst argument is that the resulting constant $C_{L,q,p}$ in the reverse H\"{o}lder inequality will satisfy $\lim_{p \rightarrow q+} C_{L,q,p} = 1$. It is then natural to wonder whether, if one is willing to give up on this requirement, it is enough to assume a weaker condition than a modified log-Sobolev inequality. In particular, we are interested in conditions which ensure the weaker (\ref{eq:intro-RH0}), comparing the $p$-th moment ($p > 0$) to the zeroth one only (thereby yielding a loose comparison to all $q$-th moments for $0 < q < p$ by Jensen's inequality (\ref{eq:intro-Jensen})). 

One very simple such condition is given by sub-exponential concentration inequalities for $1$-Lipschitz functions $g$, namely an assumption of the form:
\[
\exists \mathcal{K} : \R_+ \rightarrow \R_+ \;\;\; \forall g \in \F_1 \;\;\; \forall s > 0 \;\;\; \mu\{ x \in \Omega \; ; \; g(x) \geq \int g d\mu + s \} \leq \mathcal{K}(s) ,
\]
where $\mathcal{K}$ is a function decaying to zero exponentially or faster. 
Indeed, given an $L$-log-Lipschitz $f$, simply apply the above to the $1$-Lipschitz function $\frac{1}{L} \log f$, yielding:
\[
\mu\{ x \in \Omega ; f(x) \geq t \norm{f}_{L^0(\mu)} \} = \mu\{ x \in \Omega ; \frac{1}{L} \log f(x) \geq  \int \frac{1}{L} \log f  d\mu + \frac{1}{L} \log t  \} \leq \mathcal{K}(\frac{1}{L} \log t ) ,
\]
which may be immediately integrated (since $\mathcal{K}$ decays sub-exponentially) to obtain a reverse H\"{o}lder inequality for $f$.

The purpose of this note is to present a very general condition on $(\Omega,d,\mu)$, weaker than having a (modified) log-Sobolev inequality but also stronger than having a sub-exponential concentration inequality, which ensures the validity of (\ref{eq:intro-RH0}) for a rather general class of functions $f$ with good control over $|\nabla \log f|$. 
The condition is formulated in great generality, covering all possible Transport-Entropy inequalities, which in their weakest form degenerate to concentration inequalities as above. 
It turns out that for the mere purpose of obtaining (\ref{eq:intro-RH0}) for the class of $L$-log-Lipschitz functions, we do not know how to exploit any information beyond concentration properties of Lipschitz functions. As expected, whenever a strictly sub-exponential concentration inequality holds, then all moments ($L^p(\mu)$-norms) of log-Lipschitz functions are comparable; on the other hand,  if the space enjoys only exponential concentration, then the $p$-moments will be comparable for small enough $|p|$ only (in accordance with the obvious obstruction that the $L^p(\mu)$-norm of a log-Lipschitz function can be infinite if $|p|$ is greater than the magnitude of the exponential concentration).

However, our more general analysis has two advantages: first, it confirms (quantitatively) that $C_{L,p} \rightarrow 1$ as $p \rightarrow 0+$ in (\ref{eq:intro-RH0}) if a \emph{tight} concentration inequality is available; and second, when $\log f$ is only assumed approximately Lipschitz from one side:
\[
\log f(y) \geq \log f(x) - L(x) d(x,y) - b(x) \;\;\; \forall x,y \in \Omega,
\]
for appropriate Borel functions $L,b$ with \emph{unbounded} $L$, a concentration inequality does not seem to be enough to derive (\ref{eq:intro-RH0}), and we need to use the added power of the Transport-Entropy inequality. We defer formulating our precise results to Section \ref{sec:results}, after introducing the appropriate notation in Section \ref{sec:notation}.

As the expert reader will surely note, the proofs of these results are extremely simple once the appropriate notation and background is recalled, and very similar (if not identical) arguments are well-known in the literature. However, the author has not seen these statements explicitly elsewhere in the literature, and thought that it would be a good idea to record them in this unpresumptuous note. 

\section{Notation and Background} \label{sec:notation}

In this section, we fix some notation and terminology which will be used for stating our main result in the next section. For additional background on Functional, Transport-Entropy and Concentration inequalities, we refer to the excellent monographs by Ledoux \cite{Ledoux-Book}, Bakry--Gentil--Ledoux \cite{BGL-Book} or the paper \cite{EMilmanGeometricApproachPartII}. 

\subsection{Functional Inequalities}

Let $\F_{loc} = \F_{loc}(\Omega,d)$ denote the space of functions which are Lipschitz on every ball in $(\Omega,d)$, and let $f \in \F_{loc}$. Functional inequalities compare between some type of expression measuring the $\mu$-averaged oscillation of $f$, and an expression measuring the $\mu$-averaged magnitude of the gradient $|\nabla f|$. Here $|\nabla f|$ is defined as the following Borel function:
\[
 \abs{\nabla f}(x) := \limsup_{d(y,x) \rightarrow 0+} \frac{|f(y) - f(x)|}{d(x,y)} 
\]
(and 0 if $x$ is an isolated point - see \cite[pp. 184,189]{BobkovHoudre} for more details). Of course, when $(\Omega,d)$ is a smooth Riemannian manifold $(M,g)$ with its induced geodesic distance $d$, we have $\abs{\nabla f}:= \sqrt{g(\nabla f,\nabla f)}$.

A prime example of functional inequalities which have revolutionized the fields of Analysis and PDE is given by the family of Sobolev-Gagliardo-Nirenberg inequalities \cite[Chapter 6]{BGL-Book}. In this work, we will mostly emphasize the relation to the Poincar\'e and log-Sobolev inequalities.

\begin{definition}
$(\Omega,d,\mu)$ is said to satisfy a Poincar\'e inequality with constant $\lambda_1 > 0$ if:
\[
\lambda_1 \Var_{\mu}(f) \leq \int |\nabla f|^2 d\mu \;\;\; \forall f \in \F_{loc} ~.
\]
Here $\Var_\mu(f) := \int (f - \int f d\mu)^2 d\mu$ denotes the variance of $f$.  
\end{definition}

When $\Omega$ is a smooth connected oriented manifold-with-boundary $(M,g)$ and $\mu = \exp(-V) \vol_g$ is a probability measure with smooth and positive density on $\Omega$ with respect to the Riemannian volume measure $\vol_g$, the best constant $\lambda_1>0$ above coincides with the first non-zero eigenvalue of the associated weighted Laplacian  $-\Delta_{g,\mu}$, given by $\Delta_{g,\mu} f := \Delta_g f - g(\nabla V , \nabla f)$,  with vanishing Neumann boundary conditions on $\partial M$.

\begin{definition}
$(\Omega,d,\mu)$ is said to satisfy a log-Sobolev inequality with constant $\lambda_{LS} > 0$ if:
\[
\frac{\lambda_{LS}}{2} \Ent_{\mu}(f^2) \leq \int |\nabla f|^2 d\mu \;\;\; \forall f \in \F_{loc} ~.
\]
Here $\Ent_\mu(g) := \int g \log \brac{g / \int g d\mu} d\mu$ denotes the entropy of a non-negative function $g$. 
\end{definition} 

For example, the space $(\R^n,|\cdot|,\gamma_n)$, where $\gamma_n$ denotes the standard $n$-dimensional Gaussian measure and $|\cdot|$ is the Euclidean metric, satisfies the above inequalities with sharp constants $\lambda_1 = \lambda_{LS} = 1$. In general, a log-Sobolev inequality always implies a Poincar\'e inequality with $\lambda_{1} \geq \lambda_{LS}$, but not vice versa \cite{Ledoux-Book,BGL-Book}.

As mentioned in the Introduction, the log-Sobolev inequality ensures by the Herbst argument that (\ref{eq:intro-RH}) holds for all $q < p$ (see also \cite{AidaMasudaShigekawa}). The argument is based on the classical observation that:
\begin{equation} \label{eq:identity}
\frac{d}{dt} \log \brac{\int |f|^t d\mu}^{\frac{1}{t}} = \frac{1}{t^2} \frac{\Ent_{\mu}(|f|^t)}{\int |f|^t d\mu} . 
\end{equation}
Applying this to $f = \exp(g)$ with $|\nabla g| \leq L$, invoking the log-Sobolev inequality, and integrating in $t$ from $q$ to $p$, (\ref{eq:intro-RH}) easily follows. We illustrate this by repeating the same argument for the \emph{modified} log-Sobolev inequality of Bobkov--Ledoux \cite{BobkovLedouxModLogSobAndPoincare}. We refer to \cite{EMilmanGeometricApproachPartII} for additional background on modified log-Sobolev inequalities. 

\begin{proof}[Proof of Theorem \ref{thm:intro-Poincare}]
It was shown by Bobkov--Ledoux in \cite[Theorem 3.1]{BobkovLedouxModLogSobAndPoincare} that if $(\Omega,d,\mu)$ satisfies a Poincar\'e inequality with constant $\lambda_1>0$ then for any bounded Borel function $g : (\Omega,d) \rightarrow \R$ with $|\nabla g| \leq \ell  < 2 \sqrt{\lambda_1}$, one has:
\begin{equation} \label{eq:modified-LS}
\Ent_\mu(e^g) \leq K(\ell) \int |\nabla g|^2 e^g d\mu ,
\end{equation}
with $K(\ell)$ given by (\ref{eq:K}).

Let $f$ be an $L$-log-Lipschitz function on $(\Omega,d)$ with $\eps \leq f \leq 1/\eps$ and let $0 < t < 2 \sqrt{\lambda_1}/L$. It follows by (\ref{eq:identity}) and (\ref{eq:modified-LS}) applied to $g = t \log f$ that:
\[
\frac{d}{dt} \log \brac{\int f^t d\mu}^{\frac{1}{t}} \leq L^2 K(tL) . 
\]
Integrating this inequality in $t$ from $q$ to $p$, the assertion of the Theorem follows for all $0 < q < p < 2 \sqrt{\lambda_1}/L$.  Since $K$ is integrable at the origin, the assertion extends to $q=0$. Since $\norm{f^{-1}}_{L^q(\mu)} = \norm{f}_{L^{-q}(\mu)}^{-1}$, by applying the above to $f^{-1}$ we obtain the assertion in the negative range as well. Finally, the restriction that $\eps \leq f \leq 1/\eps$ is removed by a trivial approximation argument. 
\end{proof}

\subsection{Infimum-Convolution Inequalities}

The most convenient and transparent way to formulate our  condition for ensuring (\ref{eq:intro-RH0}) is by using infimum-convolution. 
Let $c : \Omega \times \Omega \rightarrow \R_+$ denote a non-negative cost-function which we assume is upper semi-continuous, and satisfies $c(x,y) \leq a(x) + b(y)$ for some Borel functions $a,b : \Omega \rightarrow \R$. The infimum-convolution of a Borel function $f$ with respect to the cost-function $c$ is defined as:
\[
Q_c f (x) := \inf_{y \in \Omega} \brac{ f(y) + c(x,y) }. 
\]
Note that when $f$ is $1$-Lipschitz then $Q_d f = f$. 

Let $\varphi,\Phi : \R_+ \rightarrow \R$ denote two convex non-decreasing (and hence continuous) functions; we assume that $\varphi$ is non-negative with $\varphi(0) =0$, but do not make such a restriction on $\Phi$. Recall \cite[Chapter X]{ConvexAnalysisBookII} that the Legendre-Fenchel transform $\Psi^* : \R_+ \rightarrow \R \cup \{+\infty\}$ of $\Psi \in \{\varphi,\Phi\}$  is defined as:
\[
\Psi^*(s) = \sup_{t > 0} \{ s \cdot t - \Psi(t) \},
\]
and that under our assumptions, $(\Psi^*)^* = \Psi$. Note that $\Psi^*(s)$ is finite for all $s \geq 0$ if and only if $\lim_{t \rightarrow \infty} \Psi(t) / t = +\infty$. 
We denote by $c_\varphi$ the cost-function obtained by composing $\varphi \circ d$. 

\begin{definition}
$(\Omega,d,\mu)$ is said to satisfy a $(\varphi,\Phi)$-Infimum-Convolution inequality if for any $\mu$-integrable function $f : \Omega \rightarrow \R$:
\begin{equation} \label{eq:IC}
\int \exp(\lambda Q_{c_\varphi} f) d\mu \leq \exp \brac{ \lambda \int f d\mu + \Phi^*(\lambda)} \;\;\; \forall \lambda > 0 . 
\end{equation}
When $\Phi(0) = 0$ (or equivalently $\Phi^*(0) = 0$), we will say that the inequality is tight. 
\end{definition}

When $\varphi = \Id$ so that $c_\varphi = d$, since  $Q_d f = f$ for all $1$-Lipschitz functions, the $(\Id,\Phi)$-Infimum-Convolution inequality boils down to controlling the Laplace transform of $f$:
\[
\forall \text{$1$-Lipschitz $f$ with $\int f d\mu = 0$} \;\;, \;\; \int \exp(\lambda f) d\mu \leq \exp( \Phi^*(\lambda)) \;\;\; \forall \lambda > 0 . 
\]
By the Markov-Chebyshev inequality, this entails the following sub-exponential concentration inequality of $1$-Lipschitz functions $f$ about their expected value:
\begin{align}
\nonumber & \mu \{ x \in \Omega \; ; \; f(x) \geq \int f d\mu + t \} \leq \inf_{\lambda > 0} \frac{\int \exp(\lambda (f - \int f d\mu)) d\mu}{\exp(\lambda t)}\\
\label{eq:concentration} &  \leq \inf_{\lambda > 0} \exp(\Phi^*(\lambda) - \lambda t) = \exp(-\Phi(t)) \;\;\; \forall t > 0 . 
\end{align}
Conversely, it is not hard to show that whenever a sub-exponential concentration inequality of the above form holds, it may be integrated back to an inequality for the Laplace transform (after an adjustment of constants in various places - see \cite[Lemma 4.2]{EMilmanGeometricApproachPartII} for a precise statement). Note that since we allow $\Phi(0)$ to be negative, (\ref{eq:concentration}) indeed corresponds to a concentration inequality for the \emph{large-deviation} of $1$-Lipschitz functions when $t \gg 1$, and need not provide any information when $t \rightarrow 0$. However, when $\Phi(0) = 0$, we say that the resulting concentration inequality is tight. 

Denoting $S := \lim_{t \rightarrow \infty} \Phi(t) / t $, when $S< \infty$ we say that we have \emph{exponential concentration} of Lipschitz functions, whereas if $S = \infty$ the concentration is said to be \emph{strictly sub-exponential}; similarly, if $\liminf_{t \rightarrow \infty} \Phi(t) / t^2 > 0$ the concentration is said to be \emph{sub-Gaussian}. The Herbst argument easily shows that a log-Sobolev inequality implies sub-Gaussian concentration, and that a Poincar\'e inequality implies sub-exponential concentration \cite{Ledoux-Book,BGL-Book}. 

Summarizing, we see that $(\Id,\Phi)$-Infimum-Convolution inequalities are equivalent to sub-exponential concentration inequalities for $1$-Lipschitz functions. By using more general convex functions $\varphi$, $(\varphi,\Phi)$-Infimum-Convolution inequalities yield stronger conditions on the space. It will be more convenient to describe those using the equivalent Transport-Entropy inequalities, described next.

\subsection{Transport-Entropy Inequalities}

Transport-Entropy inequalities were first introduced by Marton \cite{Marton86,Marton96} and significantly developed by Talagrand \cite{TalagrandT2}.
These compare between the cost of optimally transporting between $\mu$ and a second Borel probability measure $\nu$  and the relative entropy of $\nu$ with respect to $\mu$.
The transport cost, or Wasserstein distance, between two Borel probability measures $\nu_1,\nu_2$ on $(\Omega,d)$,  is defined as:
\[
W_c(\nu_1,\nu_2) := \inf \int_{\Omega \times \Omega} c(x,y) d\Pi(x,y) ,
\]
where the infimum is taken over all probability measures $\Pi$ on the
product space $\Omega \times \Omega$ with marginals $\nu_1$ and $\nu_2$, respectively. 
The relative entropy, or Kullback--Leibler divergence with respect to $\mu$, is defined for $\nu \ll \mu$ as:
\[
H(\nu | \mu) := \Ent_\mu \brac{\frac{d\nu}{d\mu}} = \int \log\brac{\frac{d\nu}{d\mu}} d\nu ~,
\]
and $+\infty$ otherwise. 

\begin{definition}
$(\Omega,d,\mu)$ is said to satisfy a $(\varphi,\Phi)$-Transport-Entropy inequality if for any Borel probability measure $\nu$ on $(\Omega,d)$:
\[
W_{c_\varphi}(\nu , \mu) \leq \Phi^{-1}(H(\nu | \mu) ) . 
\]
\end{definition}
\noindent Here we use the convention $\Phi^{-1}(y) := \sup\{t \geq 0 \; ; \; \Phi(t) \leq y \}$. 

A general form of the Monge--Kantorovich--Rubinstein dual representation of $W_c$ states that \cite[p. 120]{Ledoux-Book}:
\[
W_c(\nu,\mu) = \sup \brac{\int g d\nu - \int f d\mu} ,
\]
where the supremum is taken over all $\nu-$ and $\mu$-integrable functions $g$ and $f$, respectively, such that $g(x) \leq f(y) + c(x,y)$. Consequently, given $f$, it is always optimal to set $g = Q_c f$ above. On the basis of this, it was shown by Bobkov--G\"{o}tze \cite{BobkovGotzeLogSobolev} for the case of $c=d$ and extended by Bobkov--Gentil--Ledoux \cite{BobkovGentilLedoux} (cf. \cite[Proposition 6.2]{Ledoux-Book}) to the general case that Transport-Entropy inequalities are equivalent to Infimum-Convolution inequalities. We refer to \cite[Theorem 4.1]{EMilmanGeometricApproachPartII} for the most general formulation below.

\begin{proposition} \label{prop:equivalent}
$(\Omega,d,\mu)$ satisfies a $(\varphi,\Phi)$-Transport-Entropy inequality if and only if it satisfies a $(\varphi,\Phi)$-Infimum-Convolution inequality. 
\end{proposition}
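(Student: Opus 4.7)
The plan is to derive both implications using the Donsker--Varadhan variational formula for relative entropy together with the Monge--Kantorovich--Rubinstein duality recalled just before the proposition, and the biconjugate identity $(\Phi^*)^* = \Phi$. Recall that Donsker--Varadhan, in its two equivalent forms, says that for bounded measurable $g$,
\[
\log \int e^g \, d\mu = \sup_\nu \left\{ \int g \, d\nu - H(\nu|\mu) \right\} \quad \text{and} \quad H(\nu|\mu) = \sup_g \left\{ \int g \, d\nu - \log \int e^g \, d\mu \right\}.
\]

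For the direction (IC $\Rightarrow$ TE), I would apply the second form above to $g = \lambda Q_{c_\varphi} f$ with $\lambda > 0$ and bound the resulting Laplace transform using IC:
\[
\lambda \int Q_{c_\varphi} f \, d\nu \leq H(\nu|\mu) + \log \int \exp(\lambda Q_{c_\varphi} f) \, d\mu \leq H(\nu|\mu) + \lambda \int f \, d\mu + \Phi^*(\lambda).
\]
Taking the supremum over $\mu$-integrable $f$ on the left, which by MKR duality recovers $\lambda W_{c_\varphi}(\nu,\mu)$, yields $\lambda W_{c_\varphi}(\nu,\mu) - \Phi^*(\lambda) \leq H(\nu|\mu)$ for every $\lambda > 0$. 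Taking the supremum over $\lambda > 0$ and invoking $(\Phi^*)^* = \Phi$ produces $\Phi(W_{c_\varphi}(\nu,\mu)) \leq H(\nu|\mu)$, which by the convention on $\Phi^{-1}$ is exactly the claimed TE inequality.

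For the reverse (TE $\Rightarrow$ IC), I would instead use the first form of Donsker--Varadhan applied to $g = \lambda Q_{c_\varphi} f$ and bound the $\nu$-integral by combining the easy direction of MKR, $\int Q_{c_\varphi} f \, d\nu \leq W_{c_\varphi}(\nu,\mu) + \int f \, d\mu$, with the assumed TE inequality $W_{c_\varphi}(\nu,\mu) \leq \Phi^{-1}(H(\nu|\mu))$. Letting $y = H(\nu|\mu) \geq 0$, this gives
\[
\log \int \exp(\lambda Q_{c_\varphi} f) \, d\mu \leq \lambda \int f \, d\mu + \sup_{y \geq 0}\{\lambda \Phi^{-1}(y) - y\},
\]
and it only remains to verify that the last supremum equals $\Phi^*(\lambda)$. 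For the ``$\leq$'' direction, substitute $t = \Phi^{-1}(y)$ and use the continuity of the convex function $\Phi$ to get $\Phi(\Phi^{-1}(y)) \leq y$; for the matching lower bound, substitute $y = \max(\Phi(t),0)$ for $t \geq 0$ and use monotonicity of $\Phi^{-1}$.

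The main technical obstacle is bookkeeping around the conventions of the non-tight regime: since $\Phi$ is allowed to take negative values near the origin, and $\Phi^{-1}$ is defined as $\sup\{t \geq 0 : \Phi(t) \leq y\}$ rather than as a genuine inverse, the identity $\sup_{y \geq 0}\{\lambda \Phi^{-1}(y) - y\} = \Phi^*(\lambda)$ and the application of $(\Phi^*)^* = \Phi$ over $\lambda > 0$ (as opposed to $\lambda \in \mathbb{R}$) both require care near the boundary. A secondary technicality is extending Donsker--Varadhan, MKR, and the IC inequality from bounded test functions to the general $\mu$-integrable $f$ allowed in the statement, handled by truncation combined with monotone and dominated convergence and the upper semi-continuity of $c$. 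None of these steps is genuinely difficult, but together they constitute the only substantive content beyond the two dualities.
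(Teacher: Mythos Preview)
The paper does not actually prove Proposition~\ref{prop:equivalent}: it is stated as a known result, attributed to Bobkov--G\"otze and Bobkov--Gentil--Ledoux, with a pointer to \cite[Theorem 4.1]{EMilmanGeometricApproachPartII} for the general formulation. So there is no ``paper's own proof'' to compare against.

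That said, your proposal is precisely the standard argument found in those references: combine Monge--Kantorovich--Rubinstein duality (which the paper recalls immediately before the proposition) with the Donsker--Varadhan variational formula for relative entropy, and close the loop via $(\Phi^*)^* = \Phi$. Both directions are correct as you outline them. Your identification of the genuine technical points is also accurate: the non-tight case ($\Phi(0)<0$) and the convention $\Phi^{-1}(y) = \sup\{t \geq 0 : \Phi(t)\leq y\}$ require a moment of care when matching $\sup_{y\geq 0}\{\lambda \Phi^{-1}(y)-y\}$ with $\Phi^*(\lambda)$ and when restricting the biconjugate to $\lambda>0$; and the passage from bounded to $\mu$-integrable test functions is routine truncation. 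None of this affects the validity of the scheme. In short, your proof is correct and is exactly the one the cited literature gives.
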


Note that by Jensen's inequality:
\[
\varphi(W_{d}(\nu,\mu)) \leq W_{c_\varphi}(\nu,\mu) .
\]
Consequently, a $(\varphi,\Phi)$-Transport-Entropy inequality immediately implies the weaker $(\Id,\Phi \circ \varphi)$ concentration inequality, and so by Proposition \ref{prop:equivalent}, the same holds true for Infimum-Convolution inequalities. It is also possible to see this implication directly on the level of Infimum-Convolution inequalities, by applying the $(\varphi,\Phi)$-Infimum-Convolution inequality to $\frac{\lambda}{\alpha} f$:
\[
\exp(-\alpha \varphi^*(\lambda/ \alpha)) \int \exp (\lambda Q_d f) d\mu \leq \int \exp(\alpha Q_{c_\varphi} \frac{\lambda}{\alpha} f) d\mu  \leq \exp \brac{ \alpha \int \frac{\lambda}{\alpha} f d\mu + \Phi^*(\alpha)} ,
\]
optimizing on $\alpha > 0$, and recalling that  under our assumptions \cite[Chapter X, Section 2.5]{ConvexAnalysisBookII}:
\begin{equation} \label{eq:composition}
(\Phi \circ \varphi)^*(\lambda) = \inf_{\alpha > 0} \Phi^*(\alpha) + (\alpha \varphi)^*(\lambda) = \inf_{\alpha > 0} \Phi^*(\alpha) + \alpha \varphi^*(\lambda/\alpha) ,
\end{equation}
which yields:
\[
\int \exp(\lambda Q_d f) d\mu \leq \exp \brac{ \lambda \int  f d\mu + (\Phi \circ \varphi)^*(\lambda)} .
\]

Let us recall several well-known inequalities, strictly stronger than their concentration counterparts, which are obtained by using various natural convex functions $\varphi$:

\begin{itemize}
\item Setting $\varphi(t) = \frac{\lambda_{T_2}}{2} t^2$ and $\Phi = \Id$ yields Talagrand's $T_2$ Transport-Entropy inequality \cite{TalagrandT2} (cf. \cite[Section 6.2]{Ledoux-Book}). It was shown by Otto--Villani \cite{OttoVillaniHWI} and Bobkov--Gentil--Ledoux \cite{BobkovGentilLedoux} in the smooth setting (see also Lott-Villani \cite{LottVillaniHamiltonJacobi} for extensions to the  general one) that a log-Sobolev inequality implies a $T_2$ inequality with $\lambda_{T_2} \geq \lambda_{LS}$. On the other hand, it was shown by Cattiaux--Guillin \cite{CattiauxGuillinT2InqWeakerThanLogSob} that a $T_2$ inequality does not imply back the log-Sobolev inequality in general. The preceding remarks show that a $T_2$ inequality implies a $(\Id, \frac{\lambda_{T_2}}{2} t^2)$-Infimum-Convolution inequality, i.e. sub-Gaussian concentration. 
\item Let $\varphi_1(t) := \min(t^2 /2 , t - 1/2)$. Setting $\varphi(t) = \varphi_1(\sqrt{\lambda_{T_1}} t)$ and $\Phi = \Id$, one obtains Talagrand's $T_1$ Transport-Entropy inequality \cite{TalagrandT2}. It was shown by Bobkov--Gentil--Ledoux \cite[Corollary 5.1]{BobkovGentilLedoux} that this inequality is equivalent to a Poincar\'e inequality, in the sense that $c_1 \lambda_1 \leq \lambda_{T_1} \leq c_2 \lambda_1$ for some universal numeric constants $c_1,c_2 > 0$, if $\lambda_{T_1}$ and $\lambda_1$ denote the best constants in the $T_1$ and Poincar\'e inequalities, respectively. 
\end{itemize}

\section{Statements} \label{sec:results}

We are now ready to formulate and prove our general conditions for ensuring (\ref{eq:intro-RH0}). 

\begin{theorem} \label{thm:main}
Assume that $(\Omega,d,\mu)$ satisfies a $(\varphi,\Phi)$ Transport-Entropy inequality, or equivalently, Infimum-Convolution inequality, and denote $\Psi := \Phi \circ \varphi$. Then for any $L$-log-Lipschitz function $f : (\Omega,d) \rightarrow \R_+$ and $p > 0$:
\[
C_{L,p,\Psi}^{-1} \norm{f}_{L^p(\mu)} \leq  \norm{f}_{L^0(\mu)} \leq C_{L,p,\Psi} \norm{f}_{L^{-p}(\mu)} ,
\]
with:
\[
C_{L,p,\psi} = \exp \brac{\Psi^*(pL)/p}  \in [1,\infty] . 
\]
In particular, denoting $S := \lim_{t \rightarrow \infty} \Psi(t) / t$:
\begin{enumerate}
\item Whenever $S = +\infty$ then $C_{L,p,\Psi} < \infty$ for all $p > 0$ and hence all $p$-moments of any log-Lipschitz function are comparable. 
\item If $S < \infty$ then $C_{L,p,\Psi} < \infty$ for all $p \in (0,S / L)$, and hence the $p$-moments of an $L$-log-Lipschitz function are comparable in the range $p \in (-S/L,S/L)$. 
\end{enumerate}
\end{theorem}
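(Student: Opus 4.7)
The plan is to reduce the $(\varphi,\Phi)$ Transport-Entropy hypothesis to a Laplace transform bound for $1$-Lipschitz functions and then exponentiate in Herbst fashion. By Proposition \ref{prop:equivalent} I may work with the equivalent $(\varphi,\Phi)$-Infimum-Convolution inequality, and by the composition identity (\ref{eq:composition}) derived in Section \ref{sec:notation} this already implies an $(\Id,\Psi)$-Infimum-Convolution inequality with $\Psi = \Phi \circ \varphi$. Since $Q_d h = h$ whenever $h$ is $1$-Lipschitz, this amounts to the Laplace bound
\[
\int \exp(\lambda h) \, d\mu \leq \exp \brac{\lambda \int h \, d\mu + \Psi^*(\lambda)} \qquad \forall \lambda > 0,
\]
for every $1$-Lipschitz $h$.

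Given an $L$-log-Lipschitz $f > 0$, I will set $g := \log f$, so that $g / L$ is $1$-Lipschitz. Applying the display above to $h := g/L$ and choosing $\lambda := pL$ produces
\[
\int f^p \, d\mu \leq \exp \brac{ p \int \log f \, d\mu + \Psi^*(pL) },
\]
and taking $p$-th roots while recalling $\norm{f}_{L^0(\mu)} = \exp(\int \log f \, d\mu)$ gives the upper bound $\norm{f}_{L^p(\mu)} \leq C_{L,p,\Psi} \norm{f}_{L^0(\mu)}$. The companion inequality is obtained by applying this bound to $f^{-1}$, which is also $L$-log-Lipschitz, and using $\norm{f^{-1}}_{L^p(\mu)} = \norm{f}_{L^{-p}(\mu)}^{-1}$ together with $\norm{f^{-1}}_{L^0(\mu)} = \norm{f}_{L^0(\mu)}^{-1}$ to invert.

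It remains to analyze when $C_{L,p,\Psi} = \exp(\Psi^*(pL)/p)$ is finite. As already noted in the excerpt, $\Psi^*(s) < \infty$ for all $s \geq 0$ precisely when $S := \lim_{t \to \infty} \Psi(t)/t = +\infty$, which handles case (1). When $S < \infty$, one has $st - \Psi(t) = t(s - \Psi(t)/t) \to +\infty$ for any $s > S$, so $\Psi^*(s) = +\infty$ there, whereas for $s < S$ the supremum remains bounded; hence $C_{L,p,\Psi}$ is finite exactly for $pL < S$, which together with the symmetric bound applied to $f^{-1}$ yields comparability of $p$-moments throughout the range $(-S/L,S/L)$.

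I do not expect any genuine obstacle: the infrastructure built in Section \ref{sec:notation} (the TE $\Leftrightarrow$ IC equivalence, the Legendre composition identity, and the reduction to a concentration-type Laplace bound on $1$-Lipschitz functions) has already absorbed all the substantive work. What remains is essentially a one-line Herbst-type substitution $h = g/L$, $\lambda = pL$, combined with the inversion $f \mapsto f^{-1}$ for the negative range, plus a short Legendre-transform bookkeeping to read off the dichotomy in $S$.
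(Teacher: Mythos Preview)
Your proposal is correct and follows essentially the same approach as the paper: reduce the $(\varphi,\Phi)$-Infimum-Convolution inequality to the $(\Id,\Psi)$ one via (\ref{eq:composition}), apply it to the $1$-Lipschitz function $\frac{1}{L}\log f$ with $\lambda = pL$, take $p$-th roots, and invert $f\mapsto f^{-1}$ for the negative range. The only addition in the paper is an optional direct computation in the general $(\varphi,\Phi)$ case (optimizing over an auxiliary parameter $\alpha$ and invoking (\ref{eq:composition}) at the end) to confirm explicitly that no improvement over the $(\Id,\Psi)$ reduction is possible; the paper itself notes this is merely ``insightful'' and that your shortcut suffices.
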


We will compare the borderline case when $S < \infty$ to the statement of Theorem \ref{thm:intro-Poincare} below. 
In the meantime, note that when $\Psi(t) = \frac{\lambda_{2}}{2} t^2$ corresponding to a tight sub-Gaussian concentration inequality with constant $\lambda_2$, we have $\Psi^*(s) = \frac{1}{2 \lambda_2} s^2$, so Theorem \ref{thm:main} yields (\ref{eq:intro-RH0}) from the Introduction with precisely the same dependence of $C_{L,p}$ on $L$ and $p$.

\begin{proof}[Proof of Theorem \ref{thm:main}]
The proof is immediate if we use the Infimum-Convolution formulation. By the remarks from the previous section, a $(\varphi,\Phi)$-Infimum-Convolution implies a $(\Id,\Psi)$-Infimum-Convolution (or concentration) inequality, so it is enough to treat this case, which is totally elementary. However, it may be insightful to treat the general $(\varphi,\Phi)$ case to verify that we do not get any improvement over the $(\Id,\Psi)$ case. 

Note that if $g$ is $L$-Lipschitz then:
\[
Q_{c_\varphi} g(x) = \inf_{y} g(y) + \varphi(d(x,y)) \geq \inf_{y} g(x) - L d(x,y) + \varphi(d(x,y)) \geq g(x) - \varphi^*(L) . 
\]
Applying this to the $\alpha L$-Lipschitz function $g = \alpha \log f$ and invoking (\ref{eq:IC}), we deduce:
\[
\int \exp(\lambda \alpha \log f ) d\mu \leq \exp \brac{ \lambda \alpha \int \log f d\mu + \Phi^*(\lambda) + \lambda \varphi^*(\alpha L) } \;\;\; \forall \lambda , \alpha > 0  .
\]
Consequently, if $p > 0$ is given, we may optimize on $\lambda > 0$ by setting $\alpha = p / \lambda$, yielding:
\[
\norm{f}_{L^p(\mu)} \leq \exp(\frac{1}{p} \inf_{\lambda >0} \brac{ \Phi^*(\lambda) + \lambda \varphi^*(p L / \lambda)}) \norm{f}_{L^0(\mu)} = \exp((\Phi \circ \varphi)^*(pL)/p) \norm{f}_{L^0(\mu)} ,
\]
where the last identify is due to (\ref{eq:composition}). 
Since $\norm{f^{-1}}_{L^q(\mu)} = \norm{f}_{L^{-q}(\mu)}^{-1}$, by applying the above to $f^{-1}$ we obtain the assertion for $-p$. 
\end{proof}

The above wrongly suggests that there is no advantage in using the more general $(\varphi,\Phi)$-Infimum-Convolution inequality over the weaker $(\Id,\Psi)$ one when it comes to obtaining reverse H\"{o}lder inequalities. However, the advantage does become apparent when the function is not assumed to be log-Lipschitz, but only so in some averaged sense (in fact, up to an additive term and only from one side).

\begin{theorem} \label{thm:Lb}
With the same assumptions and notation as in Theorem \ref{thm:main}, let $f : (\Omega,d) \rightarrow \R_+$ be a Borel function so that:
\begin{equation} \label{eq:cond}
\log f(y) \geq \log f(x) - L(x) d(x,y) - b(x) \;\;\; \forall x,y \in \Omega,
\end{equation}
for some Borel functions $L,b : (\Omega,d) \rightarrow \R_+$. Then for all $p > 0$:
\[
\norm{f}_{L^p(\mu)} \leq \inf_{\gamma > 1, \alpha > 0} \norm{ \exp \brac{ \frac{\Phi^*(p \gamma / \alpha)}{p \gamma} + \frac{p \gamma}{\alpha} (\varphi^*(\alpha L(x)) + \alpha b(x))} }_{L^\frac{1}{\gamma-1}(\mu)} \norm{f}_{L^0(\mu)} . 
\]
\end{theorem}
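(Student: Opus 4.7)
The plan is to combine the infimum-convolution inequality (\ref{eq:IC}) with H\"older's inequality, the latter being needed to decouple the $x$-dependent weight produced by the one-sided condition (\ref{eq:cond}). First I would establish a pointwise lower bound on $Q_{c_\varphi}(\alpha \log f)$ analogous to the one used in the proof of Theorem \ref{thm:main}: multiplying (\ref{eq:cond}) by $\alpha>0$, adding $\varphi(d(x,y))$ to both sides and infimising over $y$ yields, by Legendre duality,
\[
Q_{c_\varphi}(\alpha \log f)(x) \geq \alpha \log f(x) - \alpha b(x) - \varphi^*(\alpha L(x)).
\]
The key difference with Theorem \ref{thm:main} is that the excess term $\alpha b(x) + \varphi^*(\alpha L(x))$ now genuinely depends on $x$ and cannot be pulled outside an integral as a constant, which is precisely why H\"older will be needed.

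Next I would insert this pointwise bound into (\ref{eq:IC}) applied to $h = \alpha\log f$ (after a routine truncation/approximation to secure $\mu$-integrability, as at the end of the proof of Theorem \ref{thm:main}) with parameter $\lambda = p\gamma/\alpha$, which gives
\[
\int f^{p\gamma}\exp\!\Bigl(-p\gamma\, b(x) - \tfrac{p\gamma}{\alpha} \varphi^*(\alpha L(x))\Bigr)\, d\mu(x) \leq \exp\!\Bigl(p\gamma \!\int\! \log f \, d\mu + \Phi^*(p\gamma/\alpha)\Bigr).
\]

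Finally, writing $A(x) := p\gamma\, b(x) + \tfrac{p\gamma}{\alpha}\varphi^*(\alpha L(x))$ and exploiting the identity $f^p = (f^{p\gamma} e^{-A})^{1/\gamma}\cdot e^{A/\gamma}$, I would invoke H\"older's inequality with conjugate exponents $\gamma$ and $\gamma/(\gamma-1)$ to obtain
\[
\int f^p\, d\mu \leq \Bigl(\int f^{p\gamma} e^{-A}\, d\mu\Bigr)^{1/\gamma}\Bigl(\int e^{A/(\gamma-1)}\, d\mu\Bigr)^{(\gamma-1)/\gamma}.
\]
Substituting the bound from step two into the first factor, taking the $p$-th root, and absorbing the constant prefactor $e^{\Phi^*(p\gamma/\alpha)/(p\gamma)}$ together with the factor $e^{A/(\gamma-1)}$ into the single exponential appearing inside the $L^{1/(\gamma-1)}(\mu)$ norm of the statement yields the claimed inequality; taking the infimum over the free parameters $\gamma>1$ and $\alpha>0$ then closes the argument.

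The analytic input is entirely furnished by (\ref{eq:IC}) and H\"older, so no individual step is really difficult: the main task is the algebraic bookkeeping in the last step to massage the resulting bound into the precise packaging stated in the theorem, and the only technical nuisance is the initial truncation needed to apply (\ref{eq:IC}) to $\alpha\log f$, which is routine.
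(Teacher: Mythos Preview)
Your proposal is correct and follows essentially the same route as the paper's own proof: both obtain the pointwise lower bound $Q_{c_\varphi}(\alpha\log f)(x)\ge \alpha\log f(x)-\varphi^*(\alpha L(x))-\alpha b(x)$, feed it into the Infimum-Convolution inequality (\ref{eq:IC}), and then use H\"older's inequality with conjugate exponents $\gamma$ and $\gamma/(\gamma-1)$ to separate $f^{p}$ from the $x$-dependent weight before renaming parameters. The only cosmetic difference is the order of presentation---the paper writes the H\"older step as extracting $\bigl(\int f^{p}\,d\mu\bigr)^{\gamma}$ from the left-hand side of the IC inequality, whereas you split $f^{p}=(f^{p\gamma}e^{-A})^{1/\gamma}e^{A/\gamma}$ and bound $\int f^{p}\,d\mu$ directly---but the two computations are algebraically identical.
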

\begin{remark}
Note that the assumption (\ref{eq:cond}) is not symmetric in $x,y$, as the functions $L$ and $b$ depend only on $x$ and not on $y$; as a consequence, the conclusion only holds for $p > 0$.
 In particular, (\ref{eq:cond}) holds in the Euclidean setting with $L(x) = |\nabla \log f|(x)$ and $b \equiv 0$ if $\log f$ is convex. Compare with \cite[Corollary 6.1]{BobkovGentilLedoux}.
 \end{remark}

Note that we can longer invoke (\ref{eq:composition}) to optimize on $\alpha > 0$ inside the integral, since the optimal value of $\alpha$ will depend on $x$. In particular, if $\varphi = \Id$ (corresponding to having a concentration inequality), or more generally, if $\lim_{t \rightarrow \infty} \varphi(t) / t = \infty$, and $L(x)$ is not in $L^\infty(\mu)$, the above expression is necessarily infinite and we do not obtain any information. 

\begin{proof}[Proof of Theorem \ref{thm:Lb}]
Arguing as in the proof of Theorem \ref{thm:main}, we have:
\[
\int \exp \brac{\lambda \brac{\alpha \log f - \varphi^*(\alpha L(x)) - \alpha b(x)}} d\mu \leq \exp \brac{ \lambda \alpha \int \log f d\mu + \Phi^*(\lambda)} \;\;\; \forall \lambda , \alpha > 0  .
\]
Applying  H\"{o}lder's inequality, we deduce for all $\beta \in (0,1)$:
\[
\brac{\int \exp( \beta \lambda \alpha \log f) d\mu}^{1/\beta} \leq \brac{\int \exp( - \frac{\beta}{\beta-1} \lambda (\varphi^*(\alpha L(x)) + \alpha b(x))) d\mu}^{- \frac{\beta-1}{\beta}} \exp(  \lambda \alpha \int \log f d\mu + \Phi^*(\lambda)) . 
\]
Taking the $\lambda \alpha$-th root, setting $p = \beta \lambda \alpha$, $\gamma = \frac{1}{\beta} \in (1,\infty)$ and optimizing on $\alpha,\gamma$, the asserted estimate follows. 
\end{proof}

To conclude our discussion, let us turn our attention to the borderline case when $\lim_{t \rightarrow \infty} \Psi(t) / t < \infty$, in which case we can only expect an $L$-log-Lipschitz function $f$ to have comparable $p$-moments for a bounded range of $p$'s. As usual, we assume that a $(\varphi,\Phi)$-Transport-Entropy inequality holds, and denote $\Psi = \Phi \circ \varphi$. 

The weakest information is given by a non-tight exponential concentration inequality, corresponding to the case $\varphi = \Id$ and $\Phi(t) = -M + \lambda_{\exp} t$ with $M > 0$. In that case, we obtain by Theorem \ref{thm:main}:
\[
\exp(- M) \norm{f}_{L^{p}(\mu)} \leq \norm{f}_{L^0(\mu)} \leq \exp(M) \norm{f}_{L^{-p}(\mu)} \;\;\; \forall p \in [0,\lambda_{\exp}/L] . 
\]
This has the drawback that the constant $\exp(M)$ does not tend to $1$ as $p \rightarrow 0$. 

Stronger information is given when $\varphi = \varphi_1$, in which case the $(\varphi_1,\Id)$-Transport-Entropy (or Infimum-Convolution) inequality is equivalent to a Poincar\'e inequality. The fact that a Poincar\'e inequality always yields exponential concentration was first observed by Gromov--V. Milman \cite{GromovMilmanLevyFamilies}; using our notation, they showed that for all $1$-Lipschitz functions $g$ with $\int g d\mu = 0$  one has:
\[
\int \exp(\lambda g) d\mu \leq F(\lambda , \lambda_1) < \infty \;\;\;\; \forall \lambda \in (0, c \sqrt{\lambda_1}) , 
\]
for some explicit function $F$ and universal constant $c > 0$. The sharp constant $c =2$ (as witnessed by the double exponential measure $(\R,|\cdot|,\nu)$ for which $\lambda_1 = \frac{1}{4}$) was first obtained by Schmuckenschl\"{a}ger \cite{Schmucky-PoincareAndMartingales}. The value of the function $F$ was further sharpened by Bobkov--Ledoux \cite[Proposition 4.1]{BobkovLedouxModLogSobAndPoincare}, who showed that with the same assumptions as above:
\[
\int \exp(\lambda g) d\mu \leq \frac{2 \sqrt{\lambda_1} + \lambda}{2 \sqrt{\lambda_1} - \lambda}  \;\;\;\; \forall \lambda \in (0 , 2 \sqrt{\lambda_1}) 
\]
(in fact, a slightly better estimate is available when $g$ possesses a certain symmetry). 
Applying this to $g = (\log f - \int \log f d\mu)/ L$ for an $L$-log-Lipschitz function $f$, we immediately obtain:

\begin{theorem}[following Bobkov--Ledoux \cite{BobkovLedouxModLogSobAndPoincare}] \label{thm:Poincare}
Assume that $(\Omega,d,\mu)$ satisfies a Poincar\'e inequality with constant $\lambda_1 > 0$. 
Then for any $L$-log-Lipschitz function $f$:
\[
\brac{ \frac{2 \sqrt{\lambda_1} + p L}{2 \sqrt{\lambda_1} - p L} }^{-\frac{1}{p}} \norm{f}_{L^p(\mu)} \leq  \norm{f}_{L^0(\mu)}  \leq \brac{ \frac{2 \sqrt{\lambda_1} + p L}{2 \sqrt{\lambda_1} - p L} }^{\frac{1}{p}}  \norm{f}_{L^{-p}(\mu)} \;\;\; \forall p \in (0, 2 \sqrt{\lambda_1} / L) .
\]
\end{theorem}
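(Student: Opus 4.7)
The proof is essentially dictated by the sentence immediately preceding the statement, so the plan is just to carry out the indicated substitution carefully and keep track of signs. The Bobkov--Ledoux Laplace transform estimate
\[
\int \exp(\lambda g) \, d\mu \leq \frac{2\sqrt{\lambda_1}+\lambda}{2\sqrt{\lambda_1}-\lambda} \quad \forall \lambda \in (0, 2\sqrt{\lambda_1})
\]
applies to any $1$-Lipschitz $g$ with $\int g \, d\mu = 0$, and this is the only analytic input we will use.

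Given an $L$-log-Lipschitz function $f$ (which I may first assume to be bounded away from $0$ and $\infty$, removing this restriction at the end by a trivial truncation and monotone convergence argument), the natural candidate is
\[
g(x) := \frac{1}{L}\brac{\log f(x) - \int \log f \, d\mu}.
\]
By definition $g$ is $1$-Lipschitz and $\int g \, d\mu = 0$, so it is legitimate input to the Bobkov--Ledoux estimate. Fix $p \in (0, 2\sqrt{\lambda_1}/L)$ and apply the estimate with $\lambda = pL \in (0, 2\sqrt{\lambda_1})$; using $\exp(pL g) = f^p / \exp(p \int \log f \, d\mu)$, this rearranges to
\[
\norm{f}_{L^p(\mu)}^{p} \leq \frac{2\sqrt{\lambda_1}+pL}{2\sqrt{\lambda_1}-pL} \, \norm{f}_{L^0(\mu)}^{p},
\]
recalling that $\norm{f}_{L^0(\mu)} = \exp(\int \log f \, d\mu)$. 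Taking $p$-th roots yields the left half of the claimed inequality.

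For the right half, the cleanest route is to apply the same estimate to $-g$, which is also $1$-Lipschitz with zero mean. With the same $\lambda = pL$ this gives
\[
\norm{f}_{L^0(\mu)}^{p} \norm{f}_{L^{-p}(\mu)}^{-p} = \int \exp(-pL\, g)\, d\mu \leq \frac{2\sqrt{\lambda_1}+pL}{2\sqrt{\lambda_1}-pL},
\]
and taking $p$-th roots concludes. (Equivalently, one may apply the first half to the $L$-log-Lipschitz function $f^{-1}$ and use $\norm{f^{-1}}_{L^p(\mu)} = \norm{f}_{L^{-p}(\mu)}^{-1}$ together with $\norm{f^{-1}}_{L^0(\mu)} = \norm{f}_{L^0(\mu)}^{-1}$.)

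There is really no obstacle here, since all the work has been done by Bobkov--Ledoux; the only thing to verify is that the simple exponential identity $\exp(pL g) = f^p \exp(-p \int \log f \, d\mu)$ is correctly unwound in both signs. The final truncation argument to pass from bounded $f$ to general positive $f$ is standard: truncate $f$ to $\max(\eps, \min(f, 1/\eps))$ (still $L$-log-Lipschitz), apply the bounded case, and let $\eps \to 0+$ using monotone convergence, noting the right-hand side is finite since $pL < 2\sqrt{\lambda_1}$.
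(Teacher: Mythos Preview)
Your proof is correct and follows exactly the approach indicated by the paper, which simply says to apply the Bobkov--Ledoux Laplace estimate to $g = (\log f - \int \log f\, d\mu)/L$; you have carried this out with the appropriate choice $\lambda = pL$ and handled the negative-$p$ case via $-g$ (equivalently $f^{-1}$) just as the paper does elsewhere.
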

This yields a more palatable bound than the one from Theorem \ref{thm:intro-Poincare} if one does not care about the property that  $\lim_{p \rightarrow q+} C_{L,q,p} \rightarrow 1$.

\setlinespacing{1.0}
\setlength{\bibspacing}{2pt}

\bibliographystyle{plain}
\bibliography{../../../ConvexBib}

\end{document}